\documentclass[12pt]{amsart}
\usepackage{amsmath,amssymb,amsbsy,amsfonts,amsthm,latexsym,
                        amsopn,amstext,amsxtra,euscript,amscd,mathrsfs,color,bm}
                        
\usepackage{float} 
\usepackage[english]{babel}
\usepackage{url}
\usepackage[colorlinks,linkcolor=blue,anchorcolor=blue,citecolor=blue,backref=page]{hyperref}
\usepackage[norefs,nocites]{refcheck}

 \usepackage[np]{numprint}

\npdecimalsign{\ensuremath{.}}

\usepackage{bibentry}

\usepackage[english]{babel}
\usepackage{mathtools}
\usepackage{todonotes}
\usepackage{url}
\usepackage[colorlinks,linkcolor=blue,anchorcolor=blue,citecolor=blue,backref=page]{hyperref}

    \begin{document}

\newcommand{\commA}[2][]{\todo[#1,color=yellow]{A: #2}}
\newcommand{\commI}[2][]{\todo[#1,color=green!60]{I: #2}}
    
\newtheorem{theorem}{Theorem}
\newtheorem{lemma}[theorem]{Lemma}
\newtheorem{example}[theorem]{Example}
\newtheorem{algol}{Algorithm}
\newtheorem{corollary}[theorem]{Corollary}
\newtheorem{prop}[theorem]{Proposition}
\newtheorem{definition}[theorem]{Definition}
\newtheorem{question}[theorem]{Question}
\newtheorem{problem}[theorem]{Problem}
\newtheorem{remark}[theorem]{Remark}
\newtheorem{conjecture}[theorem]{Conjecture}

\def\xxx{\vskip5pt\hrule\vskip5pt}

\def\Cmt#1{\underline{{\sl Comments:}} {\it{#1}}}

\newcommand{\Modp}[1]{
\begin{color}{blue}
 #1\end{color}}
 
 \def\bl#1{\begin{color}{blue}#1\end{color}} 
 \def\red#1{\begin{color}{red}#1\end{color}} 


\def\cA{{\mathcal A}}
\def\cB{{\mathcal B}}
\def\cC{{\mathcal C}}
\def\cD{{\mathcal D}}
\def\cE{{\mathcal E}}
\def\cF{{\mathcal F}}
\def\cG{{\mathcal G}}
\def\cH{{\mathcal H}}
\def\cI{{\mathcal I}}
\def\cJ{{\mathcal J}}
\def\cK{{\mathcal K}}
\def\cL{{\mathcal L}}
\def\cM{{\mathcal M}}
\def\cN{{\mathcal N}}
\def\cO{{\mathcal O}}
\def\cP{{\mathcal P}}
\def\cQ{{\mathcal Q}}
\def\cR{{\mathcal R}}
\def\cS{{\mathcal S}}
\def\cT{{\mathcal T}}
\def\cU{{\mathcal U}}
\def\cV{{\mathcal V}}
\def\cW{{\mathcal W}}
\def\cX{{\mathcal X}}
\def\cY{{\mathcal Y}}
\def\cZ{{\mathcal Z}}

\def\C{\mathbb{C}}
\def\F{\mathbb{F}}
\def\K{\mathbb{K}}
\def\L{\mathbb{L}}
\def\G{\mathbb{G}}
\def\Z{\mathbb{Z}}
\def\R{\mathbb{R}}
\def\Q{\mathbb{Q}}
\def\N{\mathbb{N}}
\def\M{\textsf{M}}
\def\U{\mathbb{U}}
\def\P{\mathbb{P}}
\def\A{\mathbb{A}}
\def\fp{\mathfrak{p}}
\def\n{\mathfrak{n}}
\def\X{\mathcal{X}}
\def\x{\textrm{\bf x}}
\def\w{\textrm{\bf w}}
\def\a{\textrm{\bf a}}
\def\k{\textrm{\bf k}}
\def\ee{\textrm{\bf e}}
\def\ovQ{\overline{\Q}}
\def \Kab{\K^{\mathrm{ab}}}
\def \Qab{\Q^{\mathrm{ab}}}
\def \Qtr{\Q^{\mathrm{tr}}}
\def \Kc{\K^{\mathrm{c}}}
\def \Qc{\Q^{\mathrm{c}}}
\newcommand \rank{\operatorname{rk}}
\def\ZK{\Z_\K}
\def\ZKS{\Z_{\K,\cS}}
\def\ZKSf{\Z_{\K,\cS_f}}
\def\ZKSfG{\Z_{\K,\cS_{f,\Gamma}}}

\def\bF{\mathbf {F}}

\def\({\left(}
\def\){\right)}
\def\[{\left[}
\def\]{\right]}
\def\<{\langle}
\def\>{\rangle}

\def\gen#1{{\left\langle#1\right\rangle}}
\def\genp#1{{\left\langle#1\right\rangle}_p}
\def\genPs{{\left\langle P_1, \ldots, P_s\right\rangle}}
\def\genPsp{{\left\langle P_1, \ldots, P_s\right\rangle}_p}

\def\e{e}

\def\eq{\e_q}
\def\fh{{\mathfrak h}}

\def\lcm{{\mathrm{lcm}}\,}

\def\({\left(}
\def\){\right)}
\def\fl#1{\left\lfloor#1\right\rfloor}
\def\rf#1{\left\lceil#1\right\rceil}
\def\mand{\qquad\mbox{and}\qquad}

\def\jt{\tilde\jmath}
\def\ellmax{\ell_{\rm max}}
\def\llog{\log\log}

\def\m{{\rm m}}
\def\ch{\hat{h}}
\def\GL{{\rm GL}}
\def\Orb{\mathrm{Orb}}
\def\Per{\mathrm{Per}}
\def\Preper{\mathrm{Preper}}
\def \S{\mathcal{S}}
\def\vec#1{\mathbf{#1}}
\def\ov#1{{\overline{#1}}}
\def\Gal{{\mathrm Gal}}
\def\Sp{{\mathrm S}}
\def\tors{\mathrm{tors}}
\def\PGL{\mathrm{PGL}}
\def\wH{{\rm H}}
\def\Gm{\G_{\rm m}}

\def\house#1{{%
    \setbox0=\hbox{$#1$}
    \vrule height \dimexpr\ht0+1.4pt width .5pt depth \dp0\relax
    \vrule height \dimexpr\ht0+1.4pt width \dimexpr\wd0+2pt depth \dimexpr-\ht0-1pt\relax
    \llap{$#1$\kern1pt}
    \vrule height \dimexpr\ht0+1.4pt width .5pt depth \dp0\relax}}

\newcommand{\bfalpha}{{\boldsymbol{\alpha}}}
\newcommand{\bfomega}{{\boldsymbol{\omega}}}

\newcommand{\Ch}{{\operatorname{Ch}}}
\newcommand{\Elim}{{\operatorname{Elim}}}
\newcommand{\proj}{{\operatorname{proj}}}
\newcommand{\h}{{\operatorname{\mathrm{h}}}}
\newcommand{\ord}{\operatorname{ord}}

\newcommand{\hh}{\mathrm{h}}
\newcommand{\aff}{\mathrm{aff}}
\newcommand{\Spec}{{\operatorname{Spec}}}
\newcommand{\Res}{{\operatorname{Res}}}

\def\fA{{\mathfrak A}}
\def\fB{{\mathfrak B}}

\numberwithin{equation}{section}
\numberwithin{theorem}{section}

\title[On Intersections of semigroup orbits with lines]{ On intersections of polynomial semigroups orbits with plane lines}

\author[Jorge Mello]{Jorge Mello}

\address{University of New South Wales. mailing adress:\newline School of Mathematics and Statistics
UNSW Sydney 
NSW, 2052
Australia.} 

\email{j.mello@unsw.edu.au}

 \keywords{}

\begin{abstract} We study intersections of orbits in polynomial semigroup dynamics with lines on the affine plane over a number field, extending previous work of D. Ghioca, T. Tucker, M. Zieve (2008).
\end{abstract}

\maketitle
\section{Introduction}

One of the most studied topics in complex dynamics is the research on orbits of polynomial maps. For a complex number $x$ and polynomials $\mathcal{F} = \{f_1,...,f_k \} \subset \mathbb{C}[X]$, one is very interested in understanding the orbit
\begin{center}
$\mathcal{O}_{\mathcal{F}}(x)=\{ f_{i_1}(f_{i_2}(...(f_{i_n}(x))...)): n \in \mathbb{N}, i_j=1,...,k \}$
\end{center} and 
\begin{center}
 $\mathcal{F}_n= \{f_{i_1}\circ \dots \circ f_{i_n} : i_j=1,...,k \}$.
\end{center}

Considering orbits with $k=1$, D. Ghioca, T. Tucker, M. Zieve [7] proved the following

 \textit{Let $x_0, y_0 \in \mathbb{C}$ and $f,g \in \mathbb{C}[X]$ with $\deg (f)= \deg (g)>1$. If $\mathcal{O}_f(x_0) \cap \mathcal{O}_g(y_0)$ is infinite, then $f$ and $g$ have a common iterate.}

 Such result provided the first non-monomial cases of the so-called \textit{dynamical Mordell-Lang Conjecture} proposed by Ghioca and Tucker and stated below.
  \\ \\
 \textbf{ Dynamical Mordell-Lang Conjecture.} \textit{Let $f_1,...,f_k$ be polynomials in $\mathbb{C}[X]$, and let $V$ be a subvariety of the affine space $\mathbb{A}^k$ which contains no positive dimensional subvariety that is periodic under the action of $(f_1,...,f_k)$ on $\mathbb{A}^k$. Then $V(\mathbb{C})$ has finite intersection with each orbit of $(f_1,...,f_k)$ on $\mathbb{A}^k$.}\\ \\
 For an overview and a more detailed view in the history of the above conjecture, we refer to [1].
 
 The results of [7] were also extended by the same authors to function fields in the same paper, and to cases where the degrees of the polynomials are distinct in [9], in which they also generalised results to cases of a line in a higher dimensional space intersecting a product of multiple orbits defined by one map. As a corollary (Corollary 1.5, [9]) they obtained information about the intersection of a  higher dimensional line with an orbit defined by a semigroup of polynomial maps that have all but one of its coordinates as the identity.
  R. Benedetto, D. Ghioca, P. Kurlberg and T. Tucker [3] studied cases of intersection of orbits of rational functions with curves under some natural conditions, and in [4] the same authors proved that if the conjecture does not hold in the context of endomorphisms of varieties, then the set of iterates landing on a referred subvariety forms a set of density zero.
 For a discussion with an effective viewpoint and monomial maps, see [12], and for the context of finite fields, some analysis is made in [13]. On [14], intersection of orbits and the Mordell-Lang problem is studied on the disk, with non-polynomial mappings.
 
 In this paper, we study the extension of the results of [7] to polynomial semigroup cases with $k \geq 1$ over number fields under some natural conditions. Namely, for sequences $\Phi=(\phi_{i_j})_{j=1}^\infty$ of pairs of univariate polynomials  in  a finite set $\mathcal{F}$ whose \textit{coherent orbit}
 \begin{center}$\mathcal{O}^c_{\Phi}(x,y)= \{ (x,y), \phi_{i_1}(x,y), \phi_{i_1}(\phi_{i_2}(x,y)), \phi_{i_1}(\phi_{i_2}(\phi_{i_3}(x,y)),...  \}$\end{center}
 intersects the diagonal plane line $\Delta$ on infinitely many points.
 
 Among other results, we prove the following:
  \begin{theorem} Let $x_0, y_0 \in K$, and let $\mathcal{F}= \{ \phi_1,..., \phi_s \} \subset K[X] \times K[X]$ be a finite set of pairs of polynomials which are not gotten from monomials by composing with linears on both sides, with $\phi_i=(f_i,g_i)$ and $\deg f_i= \deg g_i >1$
for each $i$. Suppose that $\#(\mathcal{O}_{\mathcal{F}}(x_0,y_0) \cap \Delta) = \infty$. If the orbit $\mathcal{O}_{\mathcal{F}}(x_0,y_0)$ satisfies that there exists a sequence $\Phi$ of terms in $\mathcal{F}$ with $\# \mathcal{O}^c_{\Phi}(x,y) \cap \Delta= \infty$, or otherwise is such that $\#(\mathcal{O}_{\mathcal{F}}(x_0,y_0) \cap \Delta) = \infty$ and  the maps of $\mathcal{F}$ commute with each other, then there exist  $ k \in \mathbb{N}$, and $\phi =(f,g) \in \mathcal{F}_k$ such that $f=g$.
\end{theorem}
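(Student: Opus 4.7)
The plan is to reduce both alternative hypotheses to a setting where the single-pair theorem of Ghioca--Tucker--Zieve recalled in the introduction applies.

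\emph{The coherent-orbit case.} Fix a sequence $\Phi=(\phi_{i_j})$ realizing the hypothesis and write $\psi_n=\phi_{i_1}\circ\cdots\circ\phi_{i_n}=(F_n,G_n)\in\mathcal{F}_n$, so that $F_n(x_0)=G_n(y_0)$ for $n$ in an infinite set $N$. For $n<m$ in $N$ the decomposition $\psi_m=\psi_n\circ\tau_{n,m}$, with $\tau_{n,m}=\phi_{i_{n+1}}\circ\cdots\circ\phi_{i_m}\in\mathcal{F}_{m-n}$, places the orbit point $\tau_{n,m}(x_0,y_0)\in\mathcal{O}_\mathcal{F}(x_0,y_0)$ on the fixed affine curve $C_n\colon F_n(X)=G_n(Y)$. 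Since $\deg f_{i_j}=\deg g_{i_j}>1$, standard height estimates make these points distinct for distinct $m$ and of unbounded height; hence $C_n$ contains infinitely many $K$-rational points of unbounded height, so by Faltings some irreducible component of $C_n$ has geometric genus $\leq 1$. I would then invoke the Bilu--Tichy classification of pairs $(F,G)$ for which $F(X)-G(Y)$ admits a factor of low genus with few points at infinity: the monomial standard pair is excluded by hypothesis, and the remaining possibilities (Chebyshev and Ritt-type specializations), combined with the compositional structure of $F_n, G_n$ and a pigeonhole on the finite alphabet $\mathcal{F}$, should force, after Ritt's decomposition theorem, that for some $k$ there exists $(f,g)\in\mathcal{F}_k$ with $f=g$ identically.

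\emph{The commuting case.} When the elements of $\mathcal{F}$ pairwise commute, every element of $\mathcal{F}_n$ rewrites as $\phi^{(a)}:=\phi_1^{a_1}\circ\cdots\circ\phi_s^{a_s}$ for some $a\in\mathbb{Z}_{\geq 0}^s$ with $|a|=n$, and the hypothesis yields an infinite set $S\subset\mathbb{Z}_{\geq 0}^s$ of exponent vectors whose iterates land on $\Delta$. By Dickson's lemma $S$ contains infinitely many pairs $a<a'$ in componentwise order; writing $\phi^{(a)}=(F^{(a)},G^{(a)})$, the identities $\phi^{(a)}(x_0,y_0), \phi^{(a')}(x_0,y_0)\in\Delta$ together with commutativity give $F^{(a)}(F^{(a'-a)}(x_0))=G^{(a)}(G^{(a'-a)}(y_0))$, placing the orbit point $\phi^{(a'-a)}(x_0,y_0)\in\mathcal{O}_\mathcal{F}(x_0,y_0)$ on the fixed curve $C_a\colon F^{(a)}(X)=G^{(a)}(Y)$. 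Varying $a'$ gives infinitely many such points, and the analysis proceeds exactly as in the coherent case. Alternatively, iterated pigeonhole on the coordinates of $a$ freezes all but one coordinate and reduces directly to the original Ghioca--Tucker--Zieve theorem for a single $\phi_i$, whose conclusion $f_i^a=g_i^b$ together with the equal-degree condition forces $a=b$, giving $\phi_i^a\in\mathcal{F}_a$ with equal coordinates.

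\emph{Main obstacle.} The hardest step is the Bilu--Tichy analysis in the coherent case: the non-monomial hypothesis excludes only the monomial standard pair, so to rule out or exploit the Chebyshev and Ritt-type standard pairs one must combine Bilu--Tichy with Ritt's decomposition theorems and the finiteness of $\mathcal{F}$ to translate the resulting compositional identities into a concrete element $(f,g)\in\mathcal{F}_k$ with $f=g$. The Chebyshev case in particular requires a careful study of how Chebyshev structure propagates through semigroup compositions of elements of $\mathcal{F}$ before the pigeonhole on the finite alphabet can deliver the desired equality.
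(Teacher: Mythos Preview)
Your overall strategy---show that for each $n$ in a nested family the curve $F_n(X)=G_n(Y)$ acquires infinitely many $S$-integral points from the orbit, then feed this into Bilu--Tichy---is exactly the paper's route. The gap is in how you pass from the Bilu--Tichy output to an element of $\mathcal{F}_k$ with equal coordinates, and you have mislocated where the non-monomial hypothesis enters.

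The paper does not analyse standard pairs case by case. Instead it uses the packaged statement (Lemma~3.1, which is Corollary~2.2 of Ghioca--Tucker--Zieve): for fixed $K$ there is a \emph{finite} set of polynomials $H$ such that any equal-degree pair $(F,G)$ with infinitely many $S$-integral solutions satisfies $F=E\circ H\circ a$, $G=E\circ c\circ H\circ b$ with $a,b,c$ linear and $H$ from that finite set. Chebyshev and all other standard pairs are already absorbed into this finiteness; no separate Chebyshev analysis is needed. One then pigeonholes on $H_k$ to find $k<l$ with $H_k=H_l$, and the rigidity lemma (Lemma~3.2: equal-degree decompositions differ by a linear) yields $\tilde F=\tilde G\circ \ell$ for some linear $\ell$ and some $(\tilde F,\tilde G)\in\mathcal{F}_{l-k}$. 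Iterating produces a sequence with $\mathbf{F}^{(n)}=\mathbf{G}^{(n)}\circ \ell_n$. The non-monomial hypothesis is used only now, via Lemma~3.3: for non-monomial $F_i,F_j$ of equal degree the equation $a\circ F_i=F_j\circ b$ has only finitely many linear solutions $(a,b)$. This forces $\ell_N=\ell_n$ for some $N>n$, and cancelling gives $F_{N-n}=G_{N-n}$ exactly. So the hypothesis is not there to exclude the monomial standard pair in Bilu--Tichy; it is there to kill the residual linear twist at the very end.

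Two smaller points. In the commuting case your Dickson's-lemma reduction is fine and mirrors the paper, but your ``alternatively, freeze all but one coordinate by iterated pigeonhole'' does not work: an infinite set of exponent vectors need not contain infinitely many with $s-1$ coordinates fixed (e.g.\ $\{(n,n,\dots,n)\}$). And invoking Faltings is unnecessary: the orbit points are $S$-integers by construction, so Lemma~3.1 applies directly.
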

In Section 2 we recall properties of height functions, in Section 3 we gather very important needed results about polynomial equations and decompositions, and our main result is proved in Section 4. Further applications of the result and methods are given in Section 5.

\section{Preliminaries on height functions}

Througout the paper,  $K$ is assumed to be a fixed number field. We consider $\mathcal{F}= \{ \phi_1,..., \phi_s \} \subset K[X] \times K[X]$ to be a finite set of pairs of polynomials, with $\phi_i=(f_i,g_i)$.
Let $x,y \in K$, and let 
\begin{center}$\mathcal{O}_{\mathcal{F}}(x,y)= \{ \phi_{i_n} \circ ... \circ \phi_{i_1}(x,y) | n \in \mathbb{N} , i_j =1,...,s \}$\end{center} denote the \textit{forward orbit of $P$ under $\mathcal{F}$.}

We set $J=\{ 1,...,s \}, W= \prod_{i=1}^\infty J$, and let $\Phi_w:=(\phi_{w_j})_{j=1}^\infty$ to be a sequence of polynomials from $\mathcal{F}$ for $w= (w_j)_{j=1}^\infty \in W$. 
In this situation we let $\Phi_w^{(n)}=\phi_{w_n} \circ ... \circ \phi_{w_1}$ with $\Phi_w^{(0)}=$Id, 
and also \begin{center}$\mathcal{F}_n :=\{ \Phi_w^{(n)} | w \in W \}$.\end{center}

Precisely, we consider polynomials sequences $\Phi$ $= (\phi_{i_j})_{j=1}^\infty \in \prod_{i=1}^\infty \mathcal{F}$ and $x,y \in {K}$,
denoting \begin{center}$\Phi^{(n)}(x,y)$ $:=\phi_{i_n}(\phi_{i_{n-1}}(...(\phi_{i_1}(x,y)))$. \end{center}

The set \newline \newline $\{ (x,y), \Phi^{(1)}(x,y),  \Phi^{(2)}(x,y),  \Phi^{(3)}(x,y),... \}$ 
$ \newline \newline =\{ (x,y), \phi_{i_1}(x,y), \phi_{i_2}(\phi_{i_1}(x,y)), \phi_{i_3}(\phi_{i_2}(\phi_{i_1}(x,y)),... \}$ \newline \newline is called the \textit{forward orbit of $(x,y)$ under $\Phi$}, denoted by
$\mathcal{O}_{\Phi} (x,y)$. 

The point $(x,y)$ is said to be $\Phi$\textit{-preperiodic} if $\mathcal{O}_{\Phi} (x,y)$ is finite.

For a $x,y \in K$, the $\mathcal{F}$\textit{-orbit} of $(x,y)$ is defined as 
\begin{align*}
 \mathcal{O}_{\mathcal{F}}(x,y)=\{ \phi(x,y) | \phi \in \bigcup_{n \geq 1} \mathcal{F}_n \}&= \{ \Phi_w^{(n)}(x,y) | n \geq 0, w \in W \}\\ &= \bigcup_{w \in W} \mathcal{O}_{\Phi_w} (x,y).
 \end{align*}
 The point $(x,y)$ is called \textit{preperiodic for} $\mathcal{F}$ if $\mathcal{O}_{\mathcal{F}}(x,y)$ is finite.
 
 We let $S$ be the \textit{shift map} which sends $\Psi$ $=(\psi_i)_{i=1}^\infty$ to \begin{center}$S(\Psi) =(\psi_{i+1})_{i=1}^\infty$.\end{center}
 
 We also define the \textit{coherent orbit} of a point $(x,y)$ under a sequence $\Phi=(\phi_{i_j})_{j=1}^\infty$ to be the set
 \begin{center}
  $\mathcal{O}^c_{\Phi}(x,y)= \{ (x,y), \phi_{i_1}(x,y), \phi_{i_1}(\phi_{i_2}(x,y)), \phi_{i_1}(\phi_{i_2}(\phi_{i_3}(x,y)),...  \}$.
 \end{center}

 We  let $\Delta$ denote the diagonal line $\{ (x,x) | x \in K \}$ in the affine plane $\mathbb{A}^2(K)$.

 In order to deal with pairs of polynomials with distinct degrees, we recall known results about certain canonical heights.
 
 Recall that for $x \in \overline{\mathbb{Q}} $, the naive logarithmic height is given by 
\begin{center}$ h(x)=\sum_{v \in M_K} \dfrac{[K_v: \mathbb{Q}_v]}{[K:\mathbb{Q}]} \log(\max \{1, |x|_v\}$, \end{center}
where $M_K$ is the set of places of $K$, $M_K^\infty$ is the set of archimedean (infinite) places of $K$, $M_K^0$ is the set of nonarchimedean (finite) places of $K$, and for each $v \in M_K$, $|.|_v$ denotes the corresponding absolute value on $K$ whose restriction to $\mathbb{Q}$ gives the usual $v$-adic absolute value on $\mathbb{Q}$.
Also, we write $K_v$ for the completion of $K$ with respect to $|.|$, and we let $\mathbb{C}_v$ denote the completion of an algebraic closure of $K_v$.
 
 Considering the affine plane over a field $L$ to be $\mathbb{A}^2(L)=L \times L$, there is a way to construct height functions associated with sequences of polynomials.
 \begin{lemma} (Theorem 2.3, 10]) 
 There is a unique way to attach to each sequence $\Phi$ $=(\phi_i)_{i=1}^\infty,$ with $ \deg \phi_i \geq 2$ as above, a canonical height function
\begin{center}
 $\hat{h}_{\Phi} : \mathbb{A}^2(\bar{K}) \rightarrow \mathbb{R}$
\end{center} such that

(i) $\sup_{x \in X(\bar{K})} |\hat{h}_{\Phi}(x) - h(x)|\leq O(1)$.

(ii) $\hat{h}_{ S (\Phi)} \circ \phi_1= (\deg {\phi_1}) \hat{h}_{ \Phi}$. In particular, \begin{center}$\hat{h}_{ S^n (\Phi)} \circ \phi_n \circ ...\circ  \phi_1=(\deg{\phi_n})...(\deg{\phi_1}) \hat{h}_{ \Phi}$.\end{center}

(iii) $\hat{h}_{ \Phi}(x) \geq 0$ for all $x $.

(iv) $\hat{h}_{ \Phi}=0$ if and only if $x$ is $\Phi$-preperiodic.

We call $\hat{h}_{\Phi}$ a canonical height function (normalized) for $\Phi$.
\end{lemma}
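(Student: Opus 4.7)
The plan is to construct $\hat{h}_{\Phi}$ via the classical Tate telescoping argument, adapted from a single map to sequences drawn from the finite set $\mathcal{F}$. The key preliminary step is to extract from the standard functorial properties of the Weil height a constant $C>0$, depending only on $\mathcal{F}$, such that
\[
\bigl| h(\phi(P)) - (\deg \phi)\, h(P) \bigr| \leq C
\]
for every $\phi \in \mathcal{F}$ and every $P \in \mathbb{A}^2(\bar K)$; here $h$ on $\mathbb{A}^2$ is taken as the sum of the coordinate Weil heights, and the uniformity in $\phi$ uses that $\mathcal{F}$ is finite and that each $\phi=(f,g)$ has $\deg f=\deg g$. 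This uniformity is exactly what allows the passage from one map to arbitrary compositions from $\mathcal{F}$.

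Setting $D_n := \deg(\phi_{i_1})\cdots \deg(\phi_{i_n})$ with $D_0=1$, I would define
\[
\hat{h}_{\Phi}(P) := \lim_{n\to\infty} \frac{h(\Phi^{(n)}(P))}{D_n}.
\]
The telescoping bound
\[
\left| \frac{h(\Phi^{(n+1)}(P))}{D_{n+1}} - \frac{h(\Phi^{(n)}(P))}{D_n} \right| = \frac{\bigl| h(\phi_{i_{n+1}}(\Phi^{(n)}(P))) - \deg(\phi_{i_{n+1}})\, h(\Phi^{(n)}(P))\bigr|}{D_{n+1}} \leq \frac{C}{D_{n+1}} \leq \frac{C}{2^{n+1}}
\]
shows the sequence is Cauchy (hence convergent) and that the limit lies within $C$ of $h(P)$, giving (i).

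Property (ii) follows by a direct reindexing: since $\Phi^{(n)}(P) = S(\Phi)^{(n-1)}(\phi_{i_1}(P))$ and $D_n = \deg(\phi_{i_1}) \cdot D^{S(\Phi)}_{n-1}$, passing to the limit yields $\hat{h}_{S(\Phi)}(\phi_{i_1}(P)) = \deg(\phi_{i_1})\, \hat{h}_{\Phi}(P)$, and iterating this gives the displayed formula for $S^n(\Phi)$. Non-negativity (iii) is inherited from $h \geq 0$. For (iv), if $P$ is $\Phi$-preperiodic then $\{\Phi^{(n)}(P)\}_n$ is finite so $h(\Phi^{(n)}(P))$ is bounded and division by $D_n \to \infty$ forces $\hat{h}_{\Phi}(P)=0$; conversely, if $\hat{h}_{\Phi}(P)=0$ then (ii) gives $\hat{h}_{S^n(\Phi)}(\Phi^{(n)}(P))=0$ for every $n$, and combining with (i) shows that the naive heights $h(\Phi^{(n)}(P))$ are uniformly bounded, so Northcott's theorem applied in a fixed number field forces the orbit to be finite. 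Uniqueness among functions satisfying (i) and (ii) is the usual trick: the difference of two such candidates is bounded and satisfies the same scaling under composition, hence it must vanish.

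The main obstacle is securing the uniform one-step inequality with a constant $C$ that does not depend on which element of $\mathcal{F}$ is applied; once that is in hand the whole construction is formal and mirrors the single-polynomial Tate argument, with only the bookkeeping of the shift operator $S$ requiring care.
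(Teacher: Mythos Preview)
The paper does not supply a proof of this lemma; it is quoted without argument as Theorem~2.3 of Kawaguchi~[10]. Your Tate-style telescoping construction is correct and is essentially the argument Kawaguchi gives there, so there is nothing to compare against in the present paper.
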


Considering conditions as above, namely, a number field $K$, and $\mathcal{H}=\{ \phi_1,...\phi_k \}$ now with $\sum_i\deg g_i >k$, 
  the uple $(\mathbb{A}^2(K),g_1,...,g_k)$ becomes a particular case of what we call a dynamical eigensystem of degree $\deg{\phi_1}+...+ \deg{\phi_k}$.
 For such, Kawaguchi also proved the following:
 \begin{lemma}(Theorem 1.2.1, [11]) There exists the canonical height function 
 \begin{center}
  $\hat{h}_{ \mathcal{H}} : \mathbb{A}^2(\bar{K}) \rightarrow \mathbb{R}$
 \end{center} for $(X,\phi_1,...,\phi_k)$ characterized by the following two properties : 
 \newline (i) $\hat{h}_{\mathcal{H}} = h_{\mathcal{H}} + O(1) ;$ 
   \newline(ii) $\sum_{j=1}^k \hat{h}_{\mathcal{H}} \circ g_j = (\deg{g_1}+...+ \deg{g_k})\hat{h}_{\mathcal{H}}$.
  \end{lemma}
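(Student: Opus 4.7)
The plan is to build $\hat{h}_{\mathcal{H}}$ by a Tate-style averaging procedure that exploits the eigensystem hypothesis $d := \sum_{j=1}^{k} \deg g_j > k$. Write $h = h_{\mathcal{H}}$ for the naive height. The standard functorial property of heights under polynomial maps gives, for each $j$, an estimate $h \circ g_j = (\deg g_j)\,h + O(1)$; summing over $j$ yields
\[\sum_{j=1}^{k} h \circ g_j \;=\; d\cdot h + O(1).\]
This suggests introducing the averaging operator $(Tf)(x) := \tfrac{1}{d}\sum_{j=1}^{k} f(g_j(x))$, so that property (ii) is precisely the fixed-point equation $T\hat{h}_{\mathcal{H}} = \hat{h}_{\mathcal{H}}$.

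Next, iterate: set $h_0 := h$ and $h_{n+1} := Th_n$; explicitly, $h_n(x) = d^{-n}\sum_{w \in \{1,\ldots,k\}^n} h(G_w(x))$ with $G_w := g_{w_1}\circ\cdots\circ g_{w_n}$. The displayed estimate above reads $\|h_1 - h_0\|_\infty \leq C/d$ for some constant $C$. Since $T$ is a positive averaging operator over $k$ terms scaled by $1/d$, the trivial bound $|(T\psi)(x)| \leq (k/d)\|\psi\|_\infty$, applied to $\psi := h_n - h_{n-1}$, gives the contraction
\[\|h_{n+1} - h_n\|_\infty \;\leq\; \frac{k}{d}\,\|h_n - h_{n-1}\|_\infty \;\leq\; \left(\frac{k}{d}\right)^{n}\frac{C}{d}.\]
The hypothesis $d > k$ forces $k/d < 1$, so the telescoping series $\sum_n (h_{n+1}-h_n)$ converges uniformly and $\hat{h}_{\mathcal{H}}(x) := \lim_n h_n(x)$ is well defined as a uniform limit.

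The two asserted properties are then immediate. Property (i) follows from the geometric bound $|\hat{h}_{\mathcal{H}}(x) - h(x)| \leq \sum_{n\geq 0}\|h_{n+1} - h_n\|_\infty \leq C/(d-k) < \infty$. Property (ii) follows by passing to the limit in $h_{n+1} = Th_n$, yielding $\hat{h}_{\mathcal{H}} = T\hat{h}_{\mathcal{H}}$, which after clearing denominators is precisely $\sum_{j=1}^k \hat{h}_{\mathcal{H}} \circ g_j = d\,\hat{h}_{\mathcal{H}}$. For the characterization (uniqueness), any competing function $\hat{h}'$ satisfying (i) and (ii) has $\phi := \hat{h}' - \hat{h}_{\mathcal{H}}$ bounded and $T\phi = \phi$; then $\|\phi\|_\infty \leq (k/d)\|\phi\|_\infty$ forces $\phi \equiv 0$ because $k/d < 1$.

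The main obstacle — and the entire reason for the hypothesis $\sum_i \deg g_i > k$ — is that the contraction factor of $T$ is $k/d$ rather than the $1/d$ one gets in the classical single-map Tate construction: there are $k$ summands, each of which can contribute full-size error, so one gains only a factor of $k/d$ per iteration. Absent the strict inequality $d > k$ the averaging operator $T$ would fail to contract and the iteration would not converge; with it in place, everything else is a formal verification.
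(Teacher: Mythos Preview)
The paper does not actually prove this lemma: it is quoted as Theorem~1.2.1 of Kawaguchi~[11] and stated without argument, so there is no ``paper's own proof'' to compare against. That said, your Tate-style averaging construction is correct and is precisely the standard method (and indeed the one Kawaguchi uses): the operator $Tf = d^{-1}\sum_j f\circ g_j$ contracts bounded functions by the factor $k/d$, and the hypothesis $\sum_i \deg g_i > k$ is exactly what makes $k/d<1$, giving existence, the two properties, and uniqueness in the way you describe.
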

  The result below is also well known. 
 \begin{lemma} (Lemma 5.4, [7])
 If $l \in K[X]$ is linear, then there exists $c_l >0$ such that $|h(l(x))-h(x)| \leq c_l$ for all $x \in \overline{K}$.
 \end{lemma}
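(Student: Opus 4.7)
The plan is to exploit the standard subadditivity and submultiplicativity of the absolute logarithmic Weil height, together with the observation that both $l$ and its inverse are linear polynomials defined over the same field $K$. First I would write $l(X) = aX + b$ with $a, b \in K$ and $a \neq 0$; then $h(a)$ and $h(b)$ are quantities depending only on $l$, not on the variable point $x \in \overline{K}$.

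The key ingredients are the two elementary estimates $h(xy) \leq h(x) + h(y)$ and $h(x+y) \leq h(x) + h(y) + \log 2$, valid for all $x, y \in \overline{K}$. Applying them in turn to the product $ax$ and then to the sum $ax + b$ yields a one-sided bound of the form $h(l(x)) - h(x) \leq h(a) + h(b) + \log 2$, which controls how much the height can grow under $l$.

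For the reverse inequality, the crucial observation is that the inverse $l^{-1}(Y) = a^{-1}Y - a^{-1}b$ is itself a linear polynomial over $K$. Applying the previous step to $l^{-1}$ evaluated at $l(x)$ gives an analogous estimate on $h(x) - h(l(x))$ in terms of $h(a^{-1})$ and $h(a^{-1}b)$. Setting $c_l$ to be the larger of the two resulting constants (plus the $\log 2$ term) then produces the desired uniform two-sided bound $|h(l(x)) - h(x)| \leq c_l$. The calculation is mechanical and I expect no real obstacle; the only point requiring attention is that the argument must be carried out in both directions in order to produce an absolute value rather than a one-sided estimate, which is exactly what the passage to $l^{-1}$ supplies.
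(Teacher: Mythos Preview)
Your argument is correct and is the standard elementary proof of this fact. Note, however, that the paper does not supply its own proof of this lemma: it is simply quoted, with attribution, as Lemma~5.4 of~[7] (Ghioca--Tucker--Zieve). So there is no in-paper proof to compare against; your proposal fills in what the paper leaves as a citation, and does so by exactly the expected route via the height inequalities $h(xy)\le h(x)+h(y)$ and $h(x+y)\le h(x)+h(y)+\log 2$ applied to both $l$ and $l^{-1}$.
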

 \section{Some results on polynomial composition}
 The result stated below is a strong fact concerning equations of the form $F(X)=G(Y)$ with infinitely many integral solutions.
 \begin{lemma} (Corollary 2.2, [7])
  Let $K$ be a number field, $S$ a finite set of nonarchimedean places of $K$, and $F, G \in K[X]$ with $\deg(F)=\deg(G)>1$.
 Suppose $F(X)=G(Y)$ has infinitely many solutions in the ring of $S$-integers of $K$. Then $F= E \circ H \circ a$ and $G= E \circ c \circ H \circ b$ for some $E,a,b,c \in \overline{K}[X]$ with $a, b$ and $c$ linears, and $H \in \overline{K}[X]$. Moreover, for fixed $K$, there are only finitely many possibilities for $H$.
 \end{lemma}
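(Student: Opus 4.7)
The plan is to combine Siegel's theorem on $S$-integral points of affine curves with the Bilu--Tichy classification of polynomial equations admitting infinitely many integral solutions, and then to exploit the equal-degree hypothesis to force the symmetric shape of the decomposition.

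First, I would consider the affine plane curve $C : F(X) - G(Y) = 0$. Since $C$ has infinitely many points with coordinates in the ring of $S$-integers of $K$, Siegel's theorem forces at least one geometric component $C_0$ of $C$ to have genus zero and at most two points at infinity. Such a component admits a parametrization by polynomials, yielding $p, q \in \overline{K}[T]$, both nonconstant, with the functional identity $F(p(T)) = G(q(T))$.

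Next, I would invoke the Bilu--Tichy theorem (or the underlying results of Ritt and Fried on polynomial decomposition), which classifies identities of the form $F \circ p = G \circ q$ in terms of a finite list of \emph{standard pairs}. The hypothesis $\deg F = \deg G$ eliminates the standard pairs whose two components have unequal degrees (for instance the monomial pair $(X^n, X^r R(X)^n)$ with $r \neq 0$ and the asymmetric Chebyshev-type pairs). The surviving possibilities collapse, after absorbing outer linear factors, to the shape $F = E \circ H \circ a$ and $G = E \circ c \circ H \circ b$ with $a, b, c$ linear and $E, H \in \overline{K}[X]$: here $E$ is the common outer factor, $H$ is a common inner factor, and the linear $c$ records the twist between the two components of the standard pair.

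For the finiteness of $H$, I would argue that $\deg H$ divides $\deg F$ and is therefore bounded, while the normal forms arising from Bilu--Tichy (monomial, Dickson--Chebyshev, and a few sporadic shapes) are parametrized by a handful of discrete data together with linear changes of variable that can be absorbed into $a$, $b$, and $c$. The main obstacle, in my view, is bookkeeping: one must track how the linear polynomials on the outside of $F$ and $G$ interact with the inner and outer parts of the standard pair, and verify that the equal-degree hypothesis genuinely rules out every twisted or asymmetric possibility that would otherwise spoil the claimed normal form.
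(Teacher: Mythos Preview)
The paper does not give its own proof of this lemma; it is quoted without argument as Corollary~2.2 of~[7] (Ghioca--Tucker--Zieve), so there is nothing in the present paper to compare against. That said, your outline is essentially the proof given in~[7]: one invokes the Bilu--Tichy theorem, which classifies pairs $(F,G)$ for which $F(X)=G(Y)$ has infinitely many $S$-integral solutions in terms of a finite list of standard pairs, and then the hypothesis $\deg F=\deg G$ rules out the asymmetric standard pairs and forces the symmetric shape $F=E\circ H\circ a$, $G=E\circ c\circ H\circ b$, with $H$ drawn from the finite Bilu--Tichy list. One technical quibble: your intermediate step ``genus zero with at most two points at infinity, hence a polynomial parametrization $F(p(T))=G(q(T))$'' is not quite right as stated, since a component with exactly two points at infinity is parametrized by Laurent polynomials, not polynomials. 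In practice this is harmless because Bilu--Tichy already package the Siegel-type input internally, and you can (and~[7] does) invoke their theorem directly rather than passing through an explicit parametrization. The finiteness of $H$ is exactly as you say: it comes from the discreteness of the standard-pair list once the linear factors are absorbed into $a$, $b$, $c$.
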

 The next surprising result that we state shows a certain rigidity on polynomial decomposition.
 \begin{lemma}(Lemma 2.3, [7]) (Rigidity)
 Let $K$ be a field of characteristic zero. If $A, B, C, D \in K[X]-K$ satisfy $A \circ B = C \circ D$ and $\deg(B)= \deg(D)$, then there is a linear $l \in K[X]$ such that $A= C \circ l^{-1}$ and $B= l \circ D$.
  
 \end{lemma}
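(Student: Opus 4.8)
The plan is to normalize so that $B$ and $D$ are monic, and then to force $D-B$ to be a constant by comparing degrees in the Taylor expansion of $C$ centred at $B$. First some bookkeeping: taking degrees in $A\circ B=C\circ D$ with $\deg B=\deg D=:m$ forces $\deg A=\deg C=:n$, and since $A,B,C,D\notin K$ we have $m,n\ge 1$. Then I would reduce to the monic case: replacing $B$ by $\rho_B\circ B$ and $A$ by $A\circ\rho_B^{-1}$, where $\rho_B(X)=X/(\text{leading coefficient of }B)\in K[X]$, preserves $A\circ B=C\circ D$ and makes $B$ monic; replacing $D$ by $\rho_D\circ D$ and $C$ by $C\circ\rho_D^{-1}$ likewise makes $D$ monic. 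No field extension is involved, since only a rescaling of the output and not an $m$-th root is needed. Once $B,D$ are monic, comparing leading coefficients of the two composites shows that $A$ and $C$ have the same leading coefficient, so $\deg(A-C)\le n-1$. It suffices to prove the lemma after this normalization: from a $K$-linear $l'$ with (normalized) $B=l'\circ D$ one recovers, for the original data, $B=l\circ D$ with the $K$-linear $l:=\rho_B^{-1}\circ l'\circ\rho_D$, and then $A=C\circ l^{-1}$ follows by cancelling the non-constant $D$, which is legitimate over the infinite field $K$.

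So assume $B,D$ monic and set $R:=D-B$; then $\deg R\le m-1$ because the leading terms of $B$ and $D$ cancel, and the goal is to show $R$ is constant. Since the characteristic is zero, Taylor's formula gives the finite expansion
\begin{equation*}
C(D(X))=C(B(X)+R(X))=\sum_{k=0}^{n}\frac{C^{(k)}(B(X))}{k!}\,R(X)^{k},
\end{equation*}
and subtracting $C(B(X))$ and using $A(B(X))=C(D(X))$ yields
\begin{equation*}
(A-C)(B(X))=\sum_{k=1}^{n}\frac{C^{(k)}(B(X))}{k!}\,R(X)^{k}.
\end{equation*}
Suppose for contradiction that $r:=\deg R\ge 1$. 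On the right-hand side the $k$-th summand has degree $(n-k)m+kr=nm-k(m-r)$, which is strictly decreasing in $k$ because $m-r\ge 1$; hence the $k=1$ term $C'(B)\cdot R$ strictly dominates, its leading coefficient being $n$ times the leading coefficient of $R$, nonzero since the characteristic is zero, so the right-hand side has degree exactly $(n-1)m+r$. But the left-hand side is either zero or has degree a multiple of $m$ not exceeding $(n-1)m$, hence in all cases of degree strictly less than $(n-1)m+r$ --- a contradiction. Therefore $R$ is a constant $c\in K$, whence $B=D-c=l'\circ D$ with $l'(X)=X-c\in K[X]$, and the argument of the first paragraph finishes the proof.

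I expect the only genuinely delicate point to be the normalization to monic $B$ and $D$: one must check that it stays inside $K[X]$ (it does, being just a rescaling) and that both assertions of the lemma are preserved under it and under its reversal. The degree count itself is routine, and the potentially worrying degenerate configurations --- $n=1$, or $A=C$ so that the left-hand side vanishes --- need no separate treatment, since in each of them the right-hand side is still forced to have degree $(n-1)m+r\ge 1$ while the left-hand side is not.
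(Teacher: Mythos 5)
The paper does not prove this lemma; it simply quotes it as Lemma~2.3 of Ghioca--Tucker--Zieve~[7], so there is no in-paper argument to compare against. Your proof is correct and is the standard rigidity argument for this fact: normalize $B,D$ to be monic (a harmless rescaling inside $K[X]$), write $D=B+R$, expand $C(B+R)$ by the finite Taylor formula (valid in characteristic zero), subtract $C(B)$, and compare degrees --- the $k=1$ term $C'(B)\,R$ strictly dominates the right-hand side because $\deg R<\deg B$, while the left-hand side $(A-C)\circ B$ has degree a multiple of $m$ that is at most $(n-1)m$ once the leading coefficients of $A$ and $C$ have been matched. One small slip: the leading coefficient of $C'(B)R$ is $n\cdot\mathrm{lc}(C)\cdot\mathrm{lc}(R)$, not $n\cdot\mathrm{lc}(R)$, since you did not normalize $C$ to be monic; this is immaterial because $n\,\mathrm{lc}(C)\neq 0$ in characteristic zero. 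The de-normalization at the end, and the cancellation of the nonconstant $D$ (over an infinite field) to pass from $B=l\circ D$ to $A=C\circ l^{-1}$, are both handled correctly.
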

 Finally, we show, under some conditions, when polynomials from a finite set can be gotten from the same set through composition with linears.
 \begin{lemma}
  Let $K$ be a field of characteristic zero, and suppose $\{ F_1,...,F_h\} \subset K[X]$ is a finite set of polynomials of degree $d>1$ with the property that $u \circ F_i \circ v$ is not a monomial whenever $u,v \in K[X]$ are linear for each $i$. Then the equations $a \circ F_i = F_j \circ b$ have only finitely many solutions in linear polynomials $a,b \in K[X]$ for each $1 \leq i ,j \leq h$.
  
 \end{lemma}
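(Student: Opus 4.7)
The plan is to normalize each $F_i$ by absorbing linear changes of coordinates, translate the non-monomial hypothesis into a concrete coefficient condition on the normal forms, and reduce the equation $a \circ F_i = F_j \circ b$ to a finite system for the parameters of $a$ and $b$.

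First I would decompose $F_i = l_i \circ N_i \circ m_i$ where $l_i, m_i \in K[X]$ are linear and $N_i$ is monic of degree $d$ with vanishing $X^{d-1}$-coefficient. This decomposition exists over $K$ itself since $\mathrm{char}(K)=0$: take $l_i$ to be multiplication by the leading coefficient of $F_i$ and $m_i$ to be the translation killing the subleading term of $l_i^{-1} \circ F_i$. A short direct calculation then shows that any $u \circ X^d \circ v$ normalizes to a polynomial of the form $X^d + c$, and conversely every such normalized polynomial is linearly conjugate to $X^d$. Hence the hypothesis that no $u \circ F_i \circ v$ is a monomial is equivalent to the statement that, for every $i$, at least one coefficient $c_k^{(i)}$ of $X^k$ in $N_i$ with $1 \le k \le d-2$ is nonzero.

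Next, given $a \circ F_i = F_j \circ b$ with $a, b \in K[X]$ linear, I would set $\tilde a = l_j^{-1} \circ a \circ l_i$ and $\tilde b = m_j \circ b \circ m_i^{-1}$, reducing the equation to $\tilde a \circ N_i = N_j \circ \tilde b$. Since $l_i, l_j, m_i, m_j$ are fixed, the map $(a,b) \mapsto (\tilde a, \tilde b)$ is a bijection on pairs of linear polynomials, so it suffices to bound the number of $(\tilde a, \tilde b)$. Writing $\tilde a(X) = \alpha X + \beta$ and $\tilde b(X) = \gamma X + \delta$ and comparing coefficients of $X^d$ and $X^{d-1}$ (using $d \ne 0$ in $K$) immediately gives $\alpha = \gamma^d$ and $\delta = 0$. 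Matching the remaining coefficients reduces to the conditions $\gamma^{d-k} c_k^{(i)} = c_k^{(j)}$ for $1 \le k \le d-2$, together with $\beta = c_0^{(j)} - \gamma^d c_0^{(i)}$.

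By the translated hypothesis, some $c_k^{(i)}$ with $1 \le k \le d-2$ is nonzero, so the corresponding equation $\gamma^{d-k} = c_k^{(j)}/c_k^{(i)}$ (or its unsatisfiability when $c_k^{(j)} = 0$) forces $\gamma$ into a finite subset of $K$; each such $\gamma$ then determines $\beta$, and hence $\tilde a, \tilde b, a, b$, uniquely. The only place I expect real care to be needed is in the equivalence used in step two, namely in identifying ``linearly conjugate to a monomial'' with the normal-form condition $N_i = X^d + c$; after that the argument is pure coefficient arithmetic and the finiteness is essentially forced by a single equation $\gamma^m = \text{constant}$ with $m \ge 2$.
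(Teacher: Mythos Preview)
Your proof is correct and follows essentially the same route as the paper's: both normalize each $F_i$ by linear pre- and post-composition (the paper kills the $X^{d-1}$ and constant terms, you make $N_i$ monic with vanishing $X^{d-1}$ term), reduce $a\circ F_i=F_j\circ b$ to a relation between normal forms with $\tilde b=\gamma X$, and then invoke the non-monomial hypothesis to pin $\gamma$ down via an equation $\gamma^{m}=\text{const}$. The only cosmetic difference is that the paper's additional normalization of the constant term lets it conclude $\hat a$ is scalar up front, whereas you recover $\beta$ from the constant-term comparison at the end.
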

 \begin{proof}
  Suppose $a \circ F_1 = F_2 \circ b$, we denote the coefficients of $X^d$ and $X^{d-1}$ in $F_1$ by $\theta_d$ and $\theta_{d-1}$, and in $F_2$ by $\tau_d$ and $\tau_{d-1}$.
  We put $\beta_1=-\theta_{d-1}/d\theta_{d}, \alpha_1= -F_1(\beta_1)$ and $\beta_2=-\tau_{d-1}/d\tau_{d}, \alpha_2= -F_2(\beta_2)$ and see that $\hat{F}_i:=\alpha_i + F_i(X + \beta_i) (i=1,2)$ have no terms of degree $d-1$ and $0$. Putting $\hat{a}:=\alpha_2 + a(x- \alpha_1)$ and $\hat{b}:= \beta_2 + b(X+ \beta_1)$, we have that $\hat{a} \circ \hat{F}_1= \hat{F}_2 \circ \hat{b}$, and both have no term of degree $d-1$.
  Hence $\hat{b}$ cannot have a term of degree $0$, neither $\hat{F}_2 \circ \hat{b}$ nor $\hat{a}$. Hence we can make $\hat{a}= \delta X$ and $\hat{b}= \gamma X$, which implies that $\delta \hat{F}_1(X)=\hat{F}_2(\gamma X)$. Writing $\hat{F}_1(X)= \sum_i u_i X^i, \hat{F}_2= \sum_i v_i X^i$, we have $\gamma^i= \delta \dfrac{u_i}{v_i}$ for each non zero $i$ term. As $\hat{F}_1, \hat{F}_2$ have at least two terms of distinct degrees, let us say $i> j$, we have $\delta^{i-j}=\dfrac{u_iv_j}{u_jv_i}$ and there are finitely many possibilities for $\delta$.
  Since by our construction $a=- \alpha_2 + \gamma^r \dfrac{v_r}{u_r}(X+ \alpha_1)$ and $b=\beta_2 + \gamma(X - \beta_1)$, there are only finitely many possilities for $a$ and $b$. Making the same procedure for any pair $(F_i, F_j)$ yields the desired result.
 \end{proof}

\section{Proof of Theorem 1.1}

\begin{proof}
 We start by letting $S$ to be a finite set of nonarchimedean places of $K$ such that the ring of $S$-integers $\mathcal{O}_S$ contains $x_0, y_0$ and every coefficient of $\phi_1,...,\phi_s$.
 Then $\mathcal{O}_S^2$ contains $\phi(x_0,y_0)$ for every $\phi \in \bigcup_{n \geq 1} \mathcal{F}_n$.

 By hypothesis we can firstly suppose $\# (\mathcal{O}_{\mathcal{F}}(x_0,y_0) \cap \Delta) = \infty$ with $\# (\mathcal{O}^c_{\Phi}(x_0,y_0) \cap \Delta) = \infty$ for some sequence $\Phi=(F,G)=(\phi_{i_j})_{j=1}^\infty= ((f_{i_j}, g_{i_j}))_{j=1}^\infty$ of terms belonging to $\mathcal{F}$,
 so that $\# \mathcal{O}_F^c(x_0)= \infty$ and $\# \mathcal{O}_G^c(y_0)= \infty$ . Let $(n_j)_{j \in \mathbb{N}}$ be such that 
 \begin{center}$\phi_{i_1}\circ...\circ \phi_{i_{n_j}}(x_0,y_0) \in \Delta$ for each $j \in \mathbb{N}$. \end{center}
 
 By the pigeonhole principle, there exists a $t_1 \in \{ 1,..., s \}$ such that for infinitely many $j$, we have that $\phi_{i_{n_j}}=\phi_{t_1}$, so that $\phi_{i_1}\circ ...\circ \phi_{i_{n_j}}(x_0,y_0)=\phi_{i_1}\circ ... \circ \phi_{t_1}(x_0,y_0) \in \Delta$. Again, for the same reason, there must exist a $\phi_{t_2} \in \mathcal{F}$ such that $\phi_{i_{n_j}}=\phi_{t_1}$,  $\phi_{i_{n_j - 1}}=\phi_{t_2}$, and $\phi_{i_1}\circ ...\circ \phi_{i_{n_j}}(x_0,y_0)=\phi_{i_1}\circ ... \circ \phi_{t_2}\circ \phi_{t_1}(x_0,y_0) \in \Delta$   for infinitely many $j$.
Obtaining $t_n$ inductively in this way, we can consider the sequence $\Phi^{\prime}=(F^{\prime},G^{\prime})=(\phi_k^{\prime})_{k=1}^\infty:=(\phi_{t_k})_{k=1}^\infty$, which by its construction satisfies that for every $k \in \mathbb{N}$, the equation ${F^{\prime}}^{(k)}(X)={G^{\prime}}^{(k)}(Y)$ has infinitely many solutions in $\mathcal{O}_S \times \mathcal{O}_S$.
By Lemma 3.1, for each $k$ we have ${F^{\prime}}^{(k)}=E_k \circ H_k \circ a_k$ and ${G^{\prime}}^{(k)}=E_k \circ c_k \circ H_k \circ b_k$ with $E_k \in \overline{K}[X]$, linears $a_k, b_k, c_k \in \overline{K}[X]$, and some $H_k \in \overline{K}[X]$ which comes from a finite set of polynomials.
Thus $H_k = H_l$ for some $k <l$.

If we write ${F^{\prime}}^{(l)}= \tilde{F} \circ {F^{\prime}}^{(k)}$ and ${G^{\prime}}^{(l)}= \tilde{G} \circ {G^{\prime}}^{(k)}$ with $(\tilde{F}, \tilde{G}) \in \mathcal{F}_{l-k}$, we have
\begin{center}
 $\tilde{F} \circ E_k \circ H_k \circ a_k= {F^{\prime}}^{(l)} = E_l \circ H_k \circ a_l $ and \newline \newline
 $\tilde{G} \circ E_k \circ c_k \circ H_k \circ b_k= {G^{\prime}}^{(l)} = E_l \circ c_l \circ H_k \circ b_l$.
\end{center} By Lemma 3.2, there are linears $u, v \in \overline{K}[X]$ such that
\begin{center}
 $H_k \circ a_k = u \circ H_k \circ a_l$ and $c_k \circ H_k \circ b_k = v \circ c_l \circ H_k \circ b_l$.
\end{center} Thus,
\begin{center}
 $\tilde{F} \circ E_k \circ u = E_l = \tilde{G} \circ E_k \circ v$,
\end{center} and by Lemma 3.2, it follows that $\tilde{F}= \tilde{G} \circ l_1$ for some $l_1 \in \overline{K}[X]$ linear.

Again from the pigeonhole principle, there exists some 
integer $k_0$ so that for infinitely many $\ell > k_0$, we have that
\begin{center}$H_{k_0} = H_\ell$\end{center}
 So, for these infinitely many $\ell$, we have 
that there exists some linear polynomial $a_\ell$ such that
\begin{center}$\phi'_{\ell}\circ \cdots \phi'_{k_0+1} := (F_{k_0,\ell}, G_{k_0, \ell})$\end{center}

satisfies

\begin{center}$F_{k_0, \ell} = G_{k_0, \ell}\circ a_\ell.$\end{center}

 Inductively, we obtain in this way an infinite $\mathcal{N} \subset \mathbb{N}$ and an infinite sequence $\Psi=(\bold{F}, \bold{G})=(\psi_i)_{i=1}^\infty$ of terms in $\bigcup_{n \geq 1} \mathcal{F}_n $ satisfying 
\begin{center} $\bold{F}^{(n)}= \bold{G}^{(n)} \circ l_n,$ with $ n \in \mathcal{N}$. \end{center} 
 By Lemma 3.2, this means  that $u_n \circ f_{i_j}= g_{i_j} \circ l_n$ for some $1 \leq i_j \leq s$ and $u_n$ linear.

Since $\{ l_n | n \in \mathcal{N} \}$ is finite by Lemma 3.3, there exist $N >n$ such that $l_N=l_n$. Then, denoting $\bold{F}^{(N)}= F_{N-n}\circ \bold{F}^{(n)}$ and $\bold{G}^{(N)}= G_{N-n}\circ \bold{G}^{(n)}$ where $F_{N-n}, G_{N-n} \in \mathcal{F}_m$ for some $m$, we have
\newline \newline
$\bold{F}^{(N)}=\bold{G}^{(N)} \circ l_N= G_{N-n}\circ \bold{G}^{(n)}\circ l_n= F_{N-n}\circ \bold{F}^{(n)}=F_{N-n}\circ \bold{G}^{(n)}\circ l_n$,
\newline \newline and thus 
\begin{center}
$F_{N-n}= G_{N-n}$
\end{center} as we wanted to show.

If otherwise, we suppose that $\# (\mathcal{O}_{\mathcal{F}}(x_0,y_0) \cap \Delta) = \infty$ and the maps of $\mathcal{F}$ commute,  we take $t_1$ such that $\phi(x_0,y_0)=(\phi_{t_1} \circ...)(x_0,y_0) \in \Delta$ for infinitely many $\phi$ if the semigroup generated by $\mathcal{F}$, so that,  $f_{t_1}(X)=g_{t_1}(Y)$ has infinitely many solutions in $\mathcal{O}_S \times \mathcal{O}_S$.
Then we choose $t_2$ such that $\phi=(\phi_{t_1} \circ \phi_{t_2} \circ ...)(x_0,y_0)=(\phi_{t_2}\circ \phi_{t_1} \circ ...)(x_0,y_0) \in \Delta$ for infinitely many $\phi \in \cup_{n \geq 0}\mathcal{F}_n$ (commutativity), so that $(f_{t_2} \circ f_{t_1})(X)=(g_{t_2} \circ g_{t_1})(Y)$ has infinitely many solutions in $\mathcal{O}_S \times \mathcal{O}_S$. In this way
we build a sequence $\Phi^{\prime}=(F^{\prime}, G^{\prime})=(\phi_{t_n})_{n \in \mathbb{N}}$ such that ${F^{\prime}}^{(k)}(X)= {G^{\prime}}^{(k)}(Y)$ has infinitely many solutions in $\mathcal{O}_S \times \mathcal{O}_S$ for each $k \in \mathbb{N}$. Then one can proceed as in the first case to achieve the desired conclusion.
\end{proof}

 It turns out that this result is actually true  for orbits which intersect an arbitrary line at infinitely many points. 
 
 \begin{corollary}
  Under the conditions of Theorem 1.1, with $L: X= l(Y)$ ( $l$ linear over $K$) in place of $\Delta$, there must exist $ k \in \mathbb{N}$, and $\phi =(f,g) \in \mathcal{F}_k$ such that $f=l\circ g$.
 \end{corollary}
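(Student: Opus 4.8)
The plan is to reduce Corollary 4.2 to Theorem 1.1 by a change of coordinates on the target of the second coordinate map. Given the line $L: X = l(Y)$ with $l$ linear over $K$, observe that a point $(x,y)$ lies on $L$ precisely when $(x, l(y))$ lies on the diagonal $\Delta$. The natural move is to conjugate: set $\tilde g_i := l \circ g_i \circ l^{-1}$ for each $i$, so that $\tilde\phi_i := (f_i, \tilde g_i)$ defines a new finite set $\tilde{\mathcal F} = \{\tilde\phi_1,\dots,\tilde\phi_s\}$, and check that the map $(x,y) \mapsto (x, l(y))$ carries $\mathcal O_{\mathcal F}(x_0,y_0)$ bijectively onto $\mathcal O_{\tilde{\mathcal F}}(x_0, l(y_0))$, carrying $L$-points to $\Delta$-points. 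Thus $\#(\mathcal O_{\tilde{\mathcal F}}(x_0,l(y_0)) \cap \Delta) = \infty$, and the analogous statement holds for coherent orbits $\mathcal O^c_\Phi$ and for the commutativity hypothesis, since conjugation by a fixed linear preserves all these structures.

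Next I would check that $\tilde{\mathcal F}$ still satisfies the hypotheses of Theorem 1.1. The degrees are unchanged: $\deg \tilde g_i = \deg g_i = \deg f_i > 1$. The non-monomial condition is preserved because $u \circ \tilde g_i \circ v = (u\circ l) \circ g_i \circ (l^{-1}\circ v)$ is a monomial iff $g_i$ is obtained from a monomial by composing with linears on both sides, which is excluded by hypothesis. The only subtlety is that $l$ is linear over $K$, so $\tilde g_i \in K[X]$ and $\tilde\phi_i \in K[X]\times K[X]$; hence we remain over the same number field and Theorem 1.1 applies verbatim.

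Applying Theorem 1.1 to $\tilde{\mathcal F}$ yields $k \in \mathbb N$ and $\tilde\phi = (f, \tilde g) \in \tilde{\mathcal F}_k$ with $f = \tilde g$. Writing $\tilde\phi = \tilde\phi_{i_k}\circ\cdots\circ\tilde\phi_{i_1}$, the first coordinate is $f = f_{i_k}\circ\cdots\circ f_{i_1}$, and by the conjugation identity the second coordinate is $\tilde g = l \circ (g_{i_k}\circ\cdots\circ g_{i_1}) \circ l^{-1} = l \circ g \circ l^{-1}$ where $\phi := (f,g) = \phi_{i_k}\circ\cdots\circ\phi_{i_1} \in \mathcal F_k$. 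The equality $f = \tilde g$ then reads $f = l \circ g \circ l^{-1}$, i.e. $f \circ l = l \circ g$. This is not quite the claimed $f = l \circ g$; to get that exact form one should instead conjugate only on the output side of the diagonal rather than symmetrize — that is, use the coordinate change $(x,y)\mapsto (x,l(y))$ directly at the level of the equations $F'^{(k)}(X) = G'^{(k)}(Y)$, which become $F'^{(k)}(X) = (l^{-1}\circ G'^{(k)}_{\text{new}})(Y)$, and track through the argument of Section 4 to land on $f = l\circ g$. I expect the main obstacle to be precisely this bookkeeping: making sure the linear $l$ threads through Lemmas 3.1--3.3 and the pigeonhole steps so that it appears on the correct side in the final identity, rather than as a conjugation; a clean way is to run the proof of Theorem 1.1 with the hypothesis "$F'^{(k)}(X) = l(G'^{(k)}(Y))$ has infinitely many $S$-integral solutions" and observe that every invocation of the rigidity lemma absorbs $l$ into the linear factors, ultimately forcing $\tilde F = l \circ \tilde G$ in place of $\tilde F = \tilde G$ at the decisive step.
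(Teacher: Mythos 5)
Your conjugation $\tilde g_i := l\circ g_i\circ l^{-1}$ is exactly the device the paper uses (the paper defines $\mathcal F^l=\{(f_i,\,l\circ g_i\circ l^{-1})\}$ and applies Theorem~1.1 to $(x_0,l(y_0))$), and your bookkeeping of what Theorem~1.1 then delivers is correct: one obtains $(f,g)\in\mathcal F_k$ with $f=l\circ g\circ l^{-1}$, i.e.\ $f\circ l=l\circ g$, \emph{not} $f=l\circ g$ as stated in the corollary. So you have put your finger on a real discrepancy between what the paper's own argument proves and what the corollary asserts. Your sanity checks on the hypotheses transferring (degrees unchanged, the non-monomial condition preserved under two-sided linear twisting, $l\in K[X]$ so one stays over $K$, coherent orbits and commutativity carried across by the semiconjugacy $(x,y)\mapsto(x,l(y))$) are all fine.

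Where the proposal goes astray is the final suggested fix. Threading $l$ directly through the proof of Section~4 does \emph{not} land on $f=l\circ g$: starting from ``$F'^{(k)}(X)=l(G'^{(k)}(Y))$ has infinitely many $\mathcal O_S$-solutions,'' Lemma~3.1 applied to $F'^{(k)}$ and $l\circ G'^{(k)}$ and the rigidity lemma give, at the step where the paper obtains $\tilde F=\tilde G\circ l_1$, instead $\tilde F=(l\circ\tilde G\circ l^{-1})\circ\ell$ for some linear $\ell$; the induction then yields $\mathbf F^{(n)}=l\circ\mathbf G^{(n)}\circ l_n$, and at the last cancellation step $l\circ G_{N-n}\circ\mathbf G^{(n)}\circ l_n=F_{N-n}\circ l\circ\mathbf G^{(n)}\circ l_n$ reduces to $F_{N-n}\circ l=l\circ G_{N-n}$, i.e.\ the same $f=l\circ g\circ l^{-1}$ as before. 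The linear $l$ is attached on the \emph{outside} of $\mathbf G^{(n)}$ throughout, so it does not get absorbed into the interchangeable inner linears $l_n$; your heuristic that ``rigidity absorbs $l$ into the linear factors'' fails precisely because $l$ and $l_n$ sit on opposite sides of $\mathbf G^{(n)}$. The upshot is not a gap in your argument but in the corollary's statement: both your route and the paper's route prove $f\circ l=l\circ g$ (equivalently $f=l\circ g\circ l^{-1}$), and the displayed conclusion $f=l\circ g$ appears to be a misstatement which the paper's one-line proof does not actually justify.
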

\begin{proof}
 Suppose $\# (\mathcal{O}_{\mathcal{F}}(x_0,y_0) \cap L) = \infty$. Then defining a new system 
 \begin{center}
 $\mathcal{F}^l:=\{(f_1, l \circ g_1 \circ l^{-1}),..., (f_s, l \circ g_s \circ l^{-1}) \}$,
\end{center} we have that $\# (\mathcal{O}_{\mathcal{F}^l}(x_0,l(y_0)) \cap \Delta) = \infty$ with the conditions of Theorem 1.1, from where the result follows.
 \end{proof}
 \section{Further applications}
   The two next corollaries are straight-forward consequences of Theorem 1.1.
  
  \begin{corollary}
   Let $x_0, y_0 \in K$, and let $\mathcal{F}= \{ \phi_1,..., \phi_s \} \subset K[X] \times K[X]$ be a set of pairs of polynomials which  are not gotten from monomials by composing with linears on both sides, with $\phi_i=(f_i,g_i)$ and $\deg f_i= \deg g_i >1$
for each $i$. If $\# (\mathcal{O}_{\{ f_1,..., f_s \}}(x_0) \cap \mathcal{O}_{\{ g_1,..., g_s \}}(y_0) ) = \infty$ such that for some sequence $\Phi$ of terms in $\{ f_1,..., f_s \} \times \{ g_1,..., g_s \}$ we have 
\begin{center}
$\mathcal{O}_{\Phi}^c(x_0,y_0) \cap \Delta=  \infty$.
\end{center}
 Then there exists  $ k \in \mathbb{N}$, and $\phi =(f,g) \in \mathcal{F}_k$ such that $f=g$.
  \end{corollary}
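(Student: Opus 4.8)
The plan is to deduce the statement from the first alternative of Theorem 1.1, after identifying the right finite system of pairs to which to apply it. As in the proof of Theorem 1.1, I would begin by fixing a finite set $S$ of nonarchimedean places of $K$ such that the ring of $S$-integers $\mathcal{O}_S$ contains $x_0$, $y_0$ and every coefficient of the $f_i$ and $g_i$; then all coordinates of all points of every coherent orbit of $(x_0,y_0)$ lie in $\mathcal{O}_S$. It is worth noting at the outset that the hypothesis $\#(\mathcal{O}_{\{f_1,\dots,f_s\}}(x_0)\cap\mathcal{O}_{\{g_1,\dots,g_s\}}(y_0))=\infty$ is really only there to phrase the result in the Ghioca--Tucker--Zieve style: writing $\Phi=(F,G)=((f_{a_j},g_{b_j}))_{j\ge 1}$ for the given sequence, a point $\phi_{a_1}\circ\cdots\circ\phi_{a_n}(x_0,y_0)=(p,p)\in\mathcal{O}^{c}_{\Phi}(x_0,y_0)\cap\Delta$ has $p=f_{a_1}\circ\cdots\circ f_{a_n}(x_0)\in\mathcal{O}_{\{f_1,\dots,f_s\}}(x_0)$ and $p=g_{b_1}\circ\cdots\circ g_{b_n}(y_0)\in\mathcal{O}_{\{g_1,\dots,g_s\}}(y_0)$, so $\#(\mathcal{O}^{c}_{\Phi}(x_0,y_0)\cap\Delta)=\infty$ already forces it.

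Suppose first that every term of $\Phi$ is one of the given pairs $\phi_i=(f_i,g_i)$, i.e.\ that $\Phi$ is a sequence of terms in $\mathcal{F}$. For each $n$ the $n$-th point $\phi_{a_1}\circ\cdots\circ\phi_{a_n}(x_0,y_0)$ of $\mathcal{O}^{c}_{\Phi}(x_0,y_0)$ equals $\Phi_w^{(n)}(x_0,y_0)$ for the index-sequence $w$ obtained by reversing $a_1,\dots,a_n$, hence lies in $\mathcal{O}_{\mathcal{F}}(x_0,y_0)$; thus $\mathcal{O}^{c}_{\Phi}(x_0,y_0)\subseteq\mathcal{O}_{\mathcal{F}}(x_0,y_0)$ and $\#(\mathcal{O}_{\mathcal{F}}(x_0,y_0)\cap\Delta)=\infty$. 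Now $\mathcal{F}$ meets all the hypotheses of Theorem 1.1 — its pairs have equal degrees exceeding $1$ and none is gotten from a monomial by composing with linears on both sides — and $\Phi$ itself witnesses the existence of a sequence of terms in $\mathcal{F}$ whose coherent orbit meets $\Delta$ infinitely often; so the first alternative of Theorem 1.1 applies and yields $k\in\mathbb{N}$ and $(f,g)\in\mathcal{F}_k$ with $f=g$, as claimed. In this reading the corollary is essentially a reformulation of Theorem 1.1, which is why it counts as a straightforward consequence.

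If instead one allows $\Phi$ to use mixed pairs $(f_i,g_j)$ with $i\ne j$, I would first pass to a finite subsystem $\mathcal{G}\subseteq\{f_1,\dots,f_s\}\times\{g_1,\dots,g_s\}$: by the pigeonhole principle, after deleting an initial segment of $\Phi$ — which changes none of the finiteness hypotheses — every term of $\Phi$ lies in $\mathcal{G}$ and each element of $\mathcal{G}$ occurs infinitely often in $\Phi$. As before $\mathcal{O}^{c}_{\Phi}(x_0,y_0)\subseteq\mathcal{O}_{\mathcal{G}}(x_0,y_0)$, so $\#(\mathcal{O}_{\mathcal{G}}(x_0,y_0)\cap\Delta)=\infty$ and $\Phi$ witnesses the coherent-sequence hypothesis for $\mathcal{G}$; the requirements that the coordinates of members of $\mathcal{G}$ have degree $>1$ and are not linearly equivalent to monomials are inherited from $\mathcal{F}$. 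The only point that is \emph{not} automatic, and which I expect to be the main obstacle, is that each pair $(f,g)\in\mathcal{G}$ satisfies $\deg f=\deg g$: this is assumed for the diagonal pairs $(f_i,g_i)$, but for a mixed pair it must be proved. Here I would compare, for the infinitely many $n$ with $\phi_{a_1}\circ\cdots\circ\phi_{a_n}(x_0,y_0)=(p_n,p_n)\in\Delta$, the two expressions for $h(p_n)$ — one through $f_{a_1}\circ\cdots\circ f_{a_n}$ and one through $g_{b_1}\circ\cdots\circ g_{b_n}$ — using the canonical heights of Lemma 2.1 and the linear-distortion bound of Lemma 2.3; this forces $\prod_{m\le n}\deg f_{a_m}$ and $\prod_{m\le n}\deg g_{b_m}$ to stay comparable along all such $n$, which, fed into the tail-stabilization above, should pin down $\deg f=\deg g$ on every pair of $\mathcal{G}$. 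Granting this, Theorem 1.1 applies to $\mathcal{G}$ and produces $k\in\mathbb{N}$ and $(f,g)\in\mathcal{G}_k$ with $f=g$; since $\mathcal{G}_k$ consists of pairs $(f_{c_1}\circ\cdots\circ f_{c_k},\,g_{d_1}\circ\cdots\circ g_{d_k})$, this is the asserted $(f,g)\in\mathcal{F}_k$.
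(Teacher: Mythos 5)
The paper's own proof is a one-liner — "we are under the conditions of Theorem 1.1" — which corresponds exactly to the first half of your argument: read $\Phi$ as a sequence of the diagonal pairs $\phi_i=(f_i,g_i)\in\mathcal{F}$, observe $\mathcal{O}^c_{\Phi}(x_0,y_0)\subseteq\mathcal{O}_{\mathcal{F}}(x_0,y_0)$ so that $\#(\mathcal{O}_{\mathcal{F}}(x_0,y_0)\cap\Delta)=\infty$, and invoke the first alternative of Theorem 1.1. That part is correct and is precisely what the author intends; the notation "$\{f_1,\dots,f_s\}\times\{g_1,\dots,g_s\}$'' is simply loose shorthand for $\mathcal{F}$ (the paper never works with mixed pairs anywhere else, and the sequences $\Phi$ are elsewhere always defined as elements of $\prod \mathcal{F}$).

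Your second half, where you try to salvage a literal reading allowing mixed pairs $(f_i,g_j)$, $i\ne j$, is a reasonable worry but the repair does not go through. Two concrete problems. First, the height comparison cannot force $\deg f_i=\deg g_j$ for every pair of $\mathcal{G}$: Lemma 2.1 and Lemma 2.3 only control the \emph{product} $\prod_{m\le n}\deg f_{a_m}$ against $\prod_{m\le n}\deg g_{b_m}$ along the subsequence of $n$ hitting $\Delta$, and a mixed pair of mismatched degrees appearing sparsely in $\Phi$ (or compensated by a later pair with the mismatch in the opposite direction) is compatible with those products staying comparable. Second, and more seriously, even if Theorem 1.1 applied to $\mathcal{G}$, its conclusion would produce $(f,g)\in\mathcal{G}_k$, i.e.\ $f=f_{c_1}\circ\cdots\circ f_{c_k}$ and $g=g_{d_1}\circ\cdots\circ g_{d_k}$ with \emph{possibly different} index strings $c$ and $d$; that is not an element of $\mathcal{F}_k$, which by the paper's definition requires the same index string in both coordinates. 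So the final sentence "this is the asserted $(f,g)\in\mathcal{F}_k$'' is a genuine gap, not a cosmetic one. The right takeaway is that the corollary's conclusion only makes sense under the diagonal reading, and the diagonal reading is what the paper's one-line proof (and your first paragraph) uses.
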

  \begin{proof}
   Here we are under the conditions of Theorem 1.1, from where the result follows.
  \end{proof}

  \begin{corollary}
   Let $x_0 \in K$, and let $\mathcal{F}= \{ f_1,..., f_s \} \subset K[X]$ be a set of polynomials which  are not gotten from monomials by composing with linears on both sides, with $\deg f_1= \deg f_2=...= \deg f_s >1$. Suppose there are two sequences (trajectories) $\Phi$ and $\Psi$ in the semigroup generated by $\mathcal{F}$ satisfying the following 
   \newline \newline 
   ``$\mathcal{O}^c_{\Phi}(x_0) \cap \mathcal{O}^c_{\Psi}(x_0)= \infty$, or otherwise $\# (\mathcal{O}_{\Phi}(x_0) \cap \mathcal{O}_{\Psi}(x_0)) = \infty$ and the elements of $\mathcal{F}$ commute``.
   \newline \newline Then $\Phi$ and $\Psi$ have two ``words'' in common, namely there exist $m, k \in \mathbb{N}$ such that
   \begin{center}
    $S^m(\Phi)^{(k)}=S^m(\Psi)^{(k)}$.
   \end{center}
   \begin{proof}
    We apply the proof of Theorem 1.1 for $\mathcal{F}$ and $\mathcal{G}=\mathcal{F}$ with $(x_0,x_0)$.
   \end{proof}

  \end{corollary}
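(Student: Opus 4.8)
The plan is to reduce the final Corollary to Theorem 1.1 by a diagonal embedding, exactly as the one-line proof in the excerpt suggests, but I want to fill in why the hypotheses transfer. First I would consider the set of pairs $\mathcal{G} = \{(f_1,f_1),\dots,(f_s,f_s)\}$; since each $f_i$ is not obtained from a monomial by composing with linears on both sides, each pair $(f_i,f_i)$ satisfies the hypothesis of Theorem 1.1, and $\deg f_i = \deg f_i > 1$ trivially. Now given the two trajectories $\Phi$ and $\Psi$ in the semigroup generated by $\mathcal{F}$, I would form the ``diagonal'' sequence of pairs whose first coordinate runs along $\Phi$ and whose second coordinate runs along $\Psi$; more precisely, if $\Phi = (f_{i_j})$ and $\Psi = (f_{k_j})$, the point $(x_0,x_0)$ together with these two sequences produces a coherent orbit in $\mathbb{A}^2$ that lands on $\Delta$ precisely when $S^m(\Phi)^{(k)}(x_0) = S^m(\Psi)^{(k)}(x_0)$ for the relevant indices — so the hypothesis $\#(\mathcal{O}^c_\Phi(x_0)\cap\mathcal{O}^c_\Psi(x_0)) = \infty$ (resp.\ the infinite-intersection-plus-commutativity hypothesis) translates into the hypothesis of Theorem 1.1 for $\mathcal{G}$ at the point $(x_0,x_0)$.

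Then I would run the proof of Theorem 1.1 verbatim for this diagonal system. The output of that proof is a $k$ and a pair $\phi = (f,g) \in \mathcal{G}_k$ with $f = g$. The key observation is that an element of $\mathcal{G}_k$ is of the form $(f_{j_k}\circ\cdots\circ f_{j_1},\, f_{j'_k}\circ\cdots\circ f_{j'_1})$ where the first string comes from the part of $\Phi$ being tracked and the second from the part of $\Psi$; unwinding the pigeonhole bookkeeping in the proof of Theorem 1.1, these two strings are the $k$-fold composites of $S^m(\Phi)$ and $S^m(\Psi)$ for a common shift $m$ picked out along the way. Hence $f = g$ reads exactly as $S^m(\Phi)^{(k)} = S^m(\Psi)^{(k)}$, which is the claimed conclusion.

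The main obstacle, and the place where one has to be slightly careful rather than just quote ``apply the proof of Theorem 1.1'', is checking that the two coordinates really do stay synchronized through the repeated applications of the pigeonhole principle in that proof: the proof of Theorem 1.1 selects a common tail index $t_n$ by matching the \emph{pair} $\phi_{i_{n_j}}$, and when the system is the diagonal one, matching the pair $(f_{i_{n_j}}, f_{k_{n_j}})$ forces matching \emph{both} the $\Phi$-letter and the $\Psi$-letter simultaneously. So the sequence $\Phi'$ constructed there is genuinely a pair of tails of $\Phi$ and of $\Psi$ beginning at the same position, and the final $F_{N-n} = G_{N-n}$ is an equality of composites of equally-many letters starting from the same shift. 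Once this synchronization is spelled out the rest is immediate from Theorem 1.1, so I would present the corollary's proof as: (1) set up $\mathcal{G}=\mathcal{F}$ and the point $(x_0,x_0)$; (2) observe the coherent/commuting hypotheses become those of Theorem 1.1; (3) note the conclusion $f=g$ of Theorem 1.1, read in $\mathcal{G}_k$, is precisely $S^m(\Phi)^{(k)} = S^m(\Psi)^{(k)}$.
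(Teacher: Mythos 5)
Your proposal fills in the paper's one-line proof in essentially the paper's own way: pair the two trajectories into a single sequence of pairs acting on $\mathbb{A}^2$ based at $(x_0,x_0)$, run the proof of Theorem 1.1, and note that the pigeonhole step selects a whole pair $\phi_{t_\ell}$ at each stage so the two coordinates stay synchronized, which is what lets the output equality $f=g$ be read as $S^m(\Phi)^{(k)}=S^m(\Psi)^{(k)}$. One inconsistency worth fixing: you define $\mathcal{G}=\{(f_1,f_1),\dots,(f_s,f_s)\}$, but the sequence $((f_{i_j},f_{k_j}))_j$ you then build has terms outside this set whenever $i_j\neq k_j$, and for the diagonal set every element of $\mathcal{G}_k$ would have identical coordinates, trivializing the conclusion; the set your later discussion actually uses, consistent with the paper's ``for $\mathcal{F}$ and $\mathcal{G}=\mathcal{F}$,'' is $\mathcal{G}=\mathcal{F}\times\mathcal{F}=\{(f_i,f_j):1\le i,j\le s\}$. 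With that correction the rest of your argument goes through as written and matches the intended proof.
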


  \begin{remark}
   In a similar way as in the proof of Corollary 4.1, it can be seen that Corollary 5.1 and Corollary 5.2 can be extended with $\Delta$ being replaced by an arbitrary plane line $L: X=l(Y)$ and the set  $\{ {\Phi}^{(n_j)}(x_0)= {\Psi}^{(n_j)}(x_0) | n_1 < n_2 < ... \}$ by $\{ {\Phi}^{(n_j)}(x_0)= l({\Psi}^{(n_j)}(x_0)) | n_1 < n_2 < ... \}$ respectively, implying the more general conclusions 
   $f=l\circ g$ and $S^m(\Phi)^{(k)}=l \circ S^m(\Psi)^{(k)}$ respectively.
  \end{remark}

  Finally we obtain informations in some cases of polynomial semigroup orbits with polynomials with distinct degrees, for which the theory of canonical heights is useful.
 
\begin{prop}
 Let $x_0, y_0 \in K$, and let $\mathcal{F}= \{ \phi_1,..., \phi_s \} \subset K[X] \times K[X]$ be a set of pairs of polynomialsof degree at least $2$, and $\Phi=(F,G)$ be a sequence of terms in $\mathcal{F}$ such that $\deg(G^{(n)})= o (\deg(F^{(n)}))$. Then 
 \begin{center}

$\# (\mathcal{O}_{\Phi}(x_0,y_0) \cap \Delta) < \infty$. 
\end{center} In particular, if $\deg f_i > \deg g_i$ for each $i=1,...,s$, then every sequence( trajectory) of $\mathcal{F}$ intersects $\Delta$ in only finitely many points.
\end{prop}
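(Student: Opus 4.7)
The plan is to exploit canonical heights: for a point on $\Delta$ the two coordinates share the same naive height, whereas under the hypothesis $\deg(G^{(n)})=o(\deg(F^{(n)}))$ the canonical heights of $x_0$ and $y_0$ are weighted by wildly different factors. Specifically, I would apply Lemma~2.1 to the univariate sequences $F=(f_{i_j})_{j=1}^\infty$ and $G=(g_{i_j})_{j=1}^\infty$ arising from the two coordinates of $\Phi$ (each $f_i$ and $g_i$ has degree at least $2$, so the lemma applies), obtaining canonical heights $\hat{h}_F,\hat{h}_G\colon \mathbb{A}^1(\overline{K})\to\mathbb{R}$.

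Arguing by contradiction, assume $\#(\mathcal{O}_\Phi(x_0,y_0)\cap\Delta)=\infty$. Each intersection point has the form $(a,a)$ with $a=F^{(n)}(x_0)=G^{(n)}(y_0)$ for some $n$; picking one such $n$ for each distinct $a$ yields an infinite set of indices on which the values $F^{(n)}(x_0)$ are pairwise distinct. In particular $x_0$ is not $F$-preperiodic, so $\hat{h}_F(x_0)>0$ by Lemma~2.1(iv).

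Properties (i) and (ii) of Lemma~2.1 then combine to yield, with a constant $C$ independent of $n$,
\begin{equation*}
\bigl|h(F^{(n)}(x_0))-\deg(F^{(n)})\,\hat{h}_F(x_0)\bigr|\le C,\quad
\bigl|h(G^{(n)}(y_0))-\deg(G^{(n)})\,\hat{h}_G(y_0)\bigr|\le C.
\end{equation*}
For the infinitely many $n$ above, the naive heights on the two sides coincide, so subtracting and dividing by $\deg(F^{(n)})$ gives
\begin{equation*}
\Bigl|\,\hat{h}_F(x_0)-\frac{\deg(G^{(n)})}{\deg(F^{(n)})}\hat{h}_G(y_0)\,\Bigr|\le\frac{2C}{\deg(F^{(n)})}.
\end{equation*}
Letting $n\to\infty$ along this index set, both the right-hand side and the degree ratio tend to $0$, forcing $\hat{h}_F(x_0)=0$, a contradiction.

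For the ``in particular'' clause, when $\deg f_i>\deg g_i$ for every $i$ one has $\deg(G^{(n)})/\deg(F^{(n)})\le r^n$ with $r=\max_i(\deg g_i/\deg f_i)<1$, so the degree hypothesis holds automatically. The chief technical point, and the main obstacle, is uniformity of the constant $C$ above over all shifts $S^n F$ and $S^n G$ of sequences drawn from the finite family $\mathcal{F}$; I would address this by a standard telescoping estimate using that $|h(p(x))-\deg(p)\,h(x)|$ is uniformly bounded for $p$ ranging over the finite set $\{f_i,g_i\}$, together with the convergence of $\sum_n \deg(F^{(n)})^{-1}$ guaranteed by every individual degree being at least $2$.
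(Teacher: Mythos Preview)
Your proposal is correct and follows essentially the same approach as the paper: both proofs invoke the canonical heights of Lemma~2.1, use that $\hat{h}_F(x_0)>0$ in the non-preperiodic case, and compare $h(F^{(n)}(x_0))\sim\deg(F^{(n)})\hat{h}_F(x_0)$ against $h(G^{(n)}(y_0))\sim\deg(G^{(n)})\hat{h}_G(y_0)$ to see that the two coordinates cannot coincide for large $n$. The paper phrases this directly (producing $\delta,\epsilon>0$ with $h(F^{(k)}(x_0))>\deg(F^{(k)})\delta>\deg(G^{(k)})\epsilon>h(G^{(k)}(y_0))$ for large $k$), whereas you run the same comparison as a contradiction; you are also more explicit than the paper about why the $O(1)$ in Lemma~2.1(i) is uniform over all shifts $S^nF,S^nG$, which is indeed the only technical point and is handled exactly by the telescoping argument you sketch.
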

\begin{proof}
 If $x_0$ or $y_0$ is preperiodic for $F$ or $G$ respectively, then the result is true. Otherwise Lemma 2.1 says that $\hat{h}_{F}(x_0)>0$, so that there is a $\delta>0$ such that every $k$ big enough satisfies
 \begin{center}
  $h(F^{(k)}(x_0))> \deg(F^{(k)})\delta$.
 \end{center} Also, there exist $\epsilon >0$ such that 
 \begin{center}
  $h(G^{(k)}(y_0))< \deg(G^{(k)})\epsilon$,
 \end{center} and by the hypothesis we know that $\deg(F^{(k)})\delta >\deg(G^{(k)})\epsilon$ for every $k$ large enough. Therefore, $h(F^{(k)}(x_0))>h(G^{(k)}(y_0))$ and $F^{(k)}(x_0) \neq G^{(k)}(y_0)$ for every $k $ large as wanted. 
\end{proof}
The result above shows in particular that if $\# (\mathcal{O}_{\Phi}(x_0,y_0) \cap \Delta) = \infty$, then it cannot be true that $\lim_n \dfrac{\deg(G^{(k)})}{\deg(F^{(k)})}$ or $\lim_n \dfrac{\deg(F^{(k)})}{\deg(G^{(k)})}$ is equal to zero, so that the distance between the canonical heights of $x_0$ and $y_0$ associated with $F$ and $G$ respectively cannot be increasingly large.

Related with Proposition 5.4 and considering the difference between the degree sum of the coordinates in the sequence, we have that if the $n$-iterates of a point under the semigroup are all contained in $\Delta$ for infinitely many $n$, then the sum of the degrees of the polynomials in the first coordinate of the generator set is equal to such sum for the second coordinate polynomials, as it follows below.
\begin{prop}
 Let $x_0, y_0 \in K$, and let $\mathcal{F}= \{ \phi_1,..., \phi_s \} \subset K[X] \times K[X]$ be a set of pairs of polynomials $\phi_i=(f_i,g_i)$ such that $\sum_i \deg f_i > \sum_i \deg g_i>s$. Suppose that $x_0$ and $y_0$ are not preperiodic for $\{ f_1,...,f_s\}$ and $\{g_1,...,g_s \}$ respectively. Then $\{ \phi(x_0,y_0) | \phi \in \mathcal{F}_n \} \not\subset \Delta$ for all but finitely many numbers $n$.
\end{prop}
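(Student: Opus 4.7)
The plan is to use Kawaguchi's canonical heights (Lemma 2.2) for the two univariate systems $\{f_1,\ldots,f_s\}$ and $\{g_1,\ldots,g_s\}$, and to derive a contradiction by comparing total height growth over all words of length $n$ on the first versus the second coordinate. Set $D_F=\sum_i\deg f_i$ and $D_G=\sum_i\deg g_i$, so that $D_F>D_G>s$ by hypothesis; in particular both $D_F>s$ and $D_G>s$, so Lemma 2.2 yields canonical heights $\hat h_F,\hat h_G\colon\A^1(\overline K)\to\R$ satisfying $\hat h_F=h+O(1)$, $\hat h_G=h+O(1)$, and the eigenvalue equations
\[
\sum_{j=1}^s \hat h_F\circ f_j = D_F\,\hat h_F, \qquad \sum_{j=1}^s \hat h_G\circ g_j = D_G\,\hat h_G.
\]
From nonnegativity of Kawaguchi's canonical height combined with Northcott's theorem applied to orbits of bounded height, one deduces $\hat h_F(x_0)>0$ and $\hat h_G(y_0)>0$, using that $x_0$ and $y_0$ are not preperiodic for their respective semigroups.

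Suppose for contradiction that $\{\phi(x_0,y_0):\phi\in\mathcal{F}_n\}\subset\Delta$ for infinitely many $n$. For each such $n$ and every tuple $w=(w_1,\ldots,w_n)\in\{1,\ldots,s\}^n$, the identity $f_{w_n}\circ\cdots\circ f_{w_1}(x_0)=g_{w_n}\circ\cdots\circ g_{w_1}(y_0)$ holds, so the naive heights of the two sides agree. Iterating the eigenvalue equation $n$ times (peeling off $w_n$, then $w_{n-1}$, and so on) gives
\[
\sum_{w\in\{1,\ldots,s\}^n} \hat h_F\bigl(f_{w_n}\circ\cdots\circ f_{w_1}(x_0)\bigr) = D_F^n\,\hat h_F(x_0),
\]
and analogously $D_G^n\,\hat h_G(y_0)$ on the $g$-side. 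Replacing each $\hat h$ by $h$ introduces a total error of $O(s^n)$ on each side, since there are $s^n$ tuples and Lemma 2.2(i) bounds $|\hat h-h|$ by a uniform constant depending only on $\mathcal{F}$. Summing the equal naive-height expressions over all tuples therefore yields
\[
D_F^n\,\hat h_F(x_0) - D_G^n\,\hat h_G(y_0) = O(s^n).
\]

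Dividing by $D_F^n$, both ratios $(D_G/D_F)^n$ and $(s/D_F)^n$ tend to zero as $n$ runs through the assumed infinite subset, forcing $\hat h_F(x_0)=0$ and contradicting positivity. The main delicate point is exactly the uniformity of the $O(1)$ in Lemma 2.2(i): one needs a bound depending only on the finite system $\mathcal{F}$, independent of the point and the tuple, so that the aggregated error over $s^n$ words is genuinely $O(s^n)$ rather than something growing faster. A small supporting observation is the deduction of $\hat h_F(x_0)>0$ from semigroup non-preperiodicity, which is immediate from nonnegativity of the canonical height and Northcott's finiteness, but should be mentioned for clarity.
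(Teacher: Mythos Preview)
Your proof is correct and follows essentially the same approach as the paper: both use Kawaguchi's canonical height (Lemma~2.2) to compare the total height sums $\sum_{w}h(f_w(x_0))\sim D_F^n\hat h_F(x_0)$ and $\sum_{w}h(g_w(y_0))\sim D_G^n\hat h_G(y_0)$, and conclude from $D_F>D_G>s$ that these cannot agree for large $n$. Your version is in fact more carefully written, making explicit the $O(s^n)$ error from passing between $h$ and $\hat h$ and the Northcott argument for $\hat h_F(x_0)>0$, whereas the paper's proof compresses these into the choice of constants $\delta,\epsilon$.
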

\begin{proof}
 Using Lemma 2.2 and the hypothesis, we proceed similarly as in the previous proof, so that for some positive numbers $\delta$ and $\epsilon$, we have that
 \begin{align*}
   \sum_{f \in \mathcal{F}_k}h(f(x_0)) &= (\sum_i \deg f_i)^k \delta \\  &> (\sum_i \deg g_i)^k \epsilon   \\ &>\sum_{g \in \mathcal{F}_k}{h}(g(y_0))                                                                                                              
 \end{align*} for every $k$ large enough, from where the result follows.
\end{proof}

\textbf{Acknowledgements}: I am very grateful to Alina Ostafe, John Roberts and Igor Shparlinski for  
 their much helpful suggestions and comments. I also thank Professor Dragos Ghioca for valuable comments, discussions and suggestions.
I am very thankful to ARC Discovery Grant DP180100201 and UNSW for supporting me in this work.

\end{document}